\documentclass{article}
\usepackage{graphicx} 
\usepackage[utf8]{inputenc}
\usepackage[english]{babel}
\usepackage{amsmath}
\usepackage{amsfonts}
\usepackage{amssymb}
\usepackage{hyperref}
\usepackage{amsthm}
\usepackage[left=3cm,right=3cm,top=3cm,bottom=3cm]{geometry}

\theoremstyle{plain}
\newtheorem{teo}{}[section]
\newtheorem{prop}[teo]{Proposition}
\newtheorem{cor}[teo]{Corollary}

\newtheorem{thm}[teo]{Theorem}
\theoremstyle{definition}

\newcommand\blfootnote[1]{%
  \begingroup
  \renewcommand\thefootnote{}\footnote{#1}%
  \addtocounter{footnote}{-1}%
  \endgroup
}

\title{Semiflows deforming automorphisms groups}
\author{Pedro J. Chocano}
\date{}

\begin{document}

\maketitle

\begin{abstract}

In this short note, we prove that, for any pair of finite groups $G$ and $H$, there exist a finite $T_0$-space $X$ and a semiflow $\varphi\colon [0,\infty)\times X\to X$ such that $\operatorname{Aut}(X)\cong G$, whereas $\operatorname{Aut}(\varphi_t(X))\cong H$ for every $t>0$. Thus, the symmetry group of a finite $T_0$-space and those of all its positive-time images can be prescribed independently.

\end{abstract}
\blfootnote{2020  Mathematics  Subject  Classification: 37C10, 	22F50.}
\blfootnote{Keywords: automorphism group, semilfow, topological space.}

\blfootnote{This research is partially supported by Grant  PID2021-126124NB-100 from Ministerio de Ciencia, Innovación y Universidades (Spain) and  2025/SOLCON-159637 from Rey Juan Carlos University}
%\blfootnote{This research is partially supported by Grant  PID2021-126124NB-100 from Ministerio de Ciencia, Innovación y Universidades (Spain).}

\section{Introduction}

The realization of algebraic structures as automorphism groups of mathematical objects is a classical theme in mathematics. One of the earliest and best-known examples is the realization of a finite group as the automorphism group of a suitably constructed graph or colored directed graph, for instance by means of Cayley-type constructions. This problem has inspired a wide range of developments, including topological constructions such as those presented in~\cite{groot2010groups,groot158groups}. A broad overview of realization problems for automorphism groups in different mathematical contexts can be found in~\cite[Section~4]{Babai1995}. In this paper, we study a realization problem with a dynamical flavor. More precisely, we consider the following question. \begin{quote} \emph{Given two finite groups $G$ and $H$, does there exist a finite topological $T_0$-space $X$ and a semiflow \[ \varphi\colon \mathbb{R}_0^+\times X\longrightarrow X \] such that \[ \operatorname{Aut}(X)\cong G \qquad\text{and}\qquad \operatorname{Aut}(\varphi_t(X))\cong H \] for every $t>0$, where $\varphi_t(x)=\varphi(t,x)$?} \end{quote} We provide an affirmative answer to this question. In fact, our main result shows that the automorphism group of the initial space and the automorphism groups of all its positive-time images can be prescribed independently. More precisely, we prove the following. 

\begin{thm}\label{thm_main_result} For any pair of finite groups $G$ and $H$, there exist a connected finite topological $T_0$-space $X$ and a semiflow \[ \varphi\colon \mathbb{R}_0^+\times X\longrightarrow X \] such that \[ \operatorname{Aut}(X)\cong G \qquad\text{and}\qquad \operatorname{Aut}(\varphi_t(X))\cong H \] for every $t>0$, where $\varphi_t(x)=\varphi(t,x)$. \end{thm}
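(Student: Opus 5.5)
The plan is to exploit the description of semiflows on finite $T_0$-spaces by comparable retractions. Identify finite $T_0$-spaces with finite posets, where open sets are down-sets. Let $r\colon X\to X$ be order-preserving and idempotent with $r(x)\le x$ for all $x$. Define $\varphi_0=\mathrm{id}$ and $\varphi_t=r$ for $t>0$.

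\textbf{The semiflow from $r$.} The semigroup law holds because $r\circ r=r$. For continuity, take an open (down-)set $U$. Its preimage is $\varphi^{-1}(U)=(\{0\}\times U)\cup((0,\infty)\times r^{-1}(U))$. Since $r(x)\le x$, we have $U\subseteq r^{-1}(U)$, and $r^{-1}(U)$ is open because $r$ is continuous. Hence $\varphi^{-1}(U)=([0,\infty)\times U)\cup((0,\infty)\times r^{-1}(U))$, which is open. So $\varphi$ is a semiflow with $\varphi_t(X)=r(X)$ for all $t>0$. The theorem therefore reduces to a combinatorial statement: find a connected finite poset $X$ with $\operatorname{Aut}(X)\cong G$ and a retraction $r\le\mathrm{id}$ whose image $P$ satisfies $\operatorname{Aut}(P)\cong H$.

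\textbf{The poset $X$.} By the known realization result (Barmak--Minian), pick finite posets $P_0$ and $Q$ with $\operatorname{Aut}(P_0)\cong H$ and $\operatorname{Aut}(Q)\cong G$. Let $P=P_0\cup\{m\}$ with $m$ a new minimum; then $\operatorname{Aut}(P)\cong H$ and $P$ is connected. Build $X$ in three pieces.
\begin{itemize}
\item Above each $p\in P$, attach a chain $C_p$ whose elements are comparable only with $p$ and the elements below $p$. The lengths are pairwise distinct and all larger than the height of $Q$ plus the height of $P$.
\item Place a copy of $Q$ above $m$ only. Every $q\in Q$ covers $m$, and no element of $P_0$ is comparable with $Q$.
\item If $Q$ needs to be distinguished from the chains, insert a fixed rigid marker between $m$ and $Q$.
\end{itemize}
Define $r$ to be the identity on $P$, $r(c)=p$ for $c\in C_p$, and $r(q)=m$ for $q$ in $Q$ or the marker. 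Then $r$ is idempotent and satisfies $r\le\mathrm{id}$. It is order-preserving: any relation $x\le y$ is either inside $P$, or inside some $C_p\cup{\downarrow}p$, or inside $Q\cup\{m\}$, and in each case the images are comparable in the right order. The image is $r(X)=P$, as required.

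\textbf{Main obstacle: $\operatorname{Aut}(X)\cong G$.} Every automorphism fixing $P$ pointwise and the chains setwise acts on $Q$, and every automorphism of $Q$ extends by the identity. So $G$ embeds in $\operatorname{Aut}(X)$, and it remains to prove the converse. The plan is to use the maximal chains of $X$.
\begin{itemize}
\item The chains $C_p$ produce maximal chains of distinct lengths that exceed every chain through $Q$. An automorphism must permute maximal chains preserving length, so it fixes each $C_p$ and hence each $p$.
\item It therefore fixes $P$ pointwise, and so it maps $Q$ (the remaining elements above $m$) to itself.
\end{itemize}
One must take care that chains of the form $P$-part followed by $C_p$ are compared correctly. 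This is why the lengths are chosen to dominate the height of $P$ as well; if needed, I would use lengths like $N\cdot(\text{index of }p)$ with $N$ large. Finally, $X$ is connected because everything is joined to $m$.
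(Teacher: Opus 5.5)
Your proof is correct, but it assembles the pieces differently from the paper.

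\textbf{The paper's route.} The paper takes the specific Barmak--Minian poset $X_H$ and makes it rigid by attaching chains of lengths $1,\dots,|H|$ only at the points $(h_i,0)$; this suffices because of the internal structure of $X_H$. It then sets $X=X_H'\circledast\{*\}\circledast X_G$. The $G$-part sits on top and collapses onto $*$, the image is $X_H\circledast\{*\}$, and both automorphism groups follow at once from Proposition~\ref{prop_automorfismos_join}.

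\textbf{Your route.} You hang $Q$ on a side branch over a new bottom point $m$, which survives in the image $P=\{m\}\circledast P_0$. You attach a chain to \emph{every} point of $P$ and compute $\operatorname{Aut}(X)$ directly from the heights of maximal elements. This treats the realizations of $G$ and $H$ as black boxes: no analysis of the interior of $P_0$ is needed. The price is a much larger space and a hand-made rigidity argument in place of the one-line join computation. Your semiflow step is the same as Theorem~\ref{thm_semiflow_description_2}, phrased via preimages of down-sets.

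\textbf{A necessary correction.} The spacing of the chain lengths is not optional, so your ``if needed'' should be dropped. Suppose the lengths are merely distinct and large, $p'$ is an isolated point of $P_0$, and $\ell_m=\ell_{p'}+1$. Then $C_m$ and $\{p'\}\cup C_{p'}$ are chains with the same number of elements. Their points are comparable only with $m$ and with points of their own chain. So exchanging them, and fixing everything else, is an automorphism of $X$ not induced by $Q$.

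Now take $\ell_p=N\cdot\mathrm{index}(p)$ with $N>\operatorname{ht}(P)+\operatorname{ht}(Q)+1$. The tops of the $C_p$ are then maximal points of pairwise distinct heights $\operatorname{ht}(p)+\ell_p$, all exceeding the heights of points of $Q$, so each top is fixed by any automorphism. Inside the down-set $U_p\cup C_p$ of that top, the points of $\{p\}\cup C_p$ are the only ones of height $\ge\operatorname{ht}(p)$, and their heights are pairwise distinct. Hence they are fixed as well, and $\operatorname{Aut}(X)\cong\operatorname{Aut}(Q)\cong G$.

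\textbf{Minor points.} ``Every $q\in Q$ covers $m$'' should read ``$m<q$ for all $q\in Q$'', since $Q$ need not be an antichain. The marker is unnecessary.
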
 

The construction starts with realizations of $G$ and $H$ as automorphism groups of finite $T_0$-spaces and connects them through a semiflow that collapses a suitable part of the space at every positive time. As a consequence, the automorphism group changes from $G$ to $H$, while the image $\varphi_t(X)$ remains independent of $t>0$.

We do not address here the problem of minimizing either the cardinality or the height of the space $X$. Nevertheless, the construction used in the proof of Theorem~\ref{thm_main_result} yields explicit bounds. The resulting space has \[ |X| = 1+|G|(|S_G|+2)+|H|(|K_H|+2)+\sum_{i=1}^{|H|} i, \] where $S_G$ and $K_H$ are generating sets of $G$ and $H$, respectively. Equivalently, \[ |X| = 1+|G|(|S_G|+2)+|H|(|K_H|+2)+\frac{|H|(|H|+1)}{2}. \] Since $|S_G|\le |G|$ and $|K_H|\le |H|$, we obtain the coarse estimate \[ |X| \le |G|^2+2|G| +\frac{3|H|^2+5|H|}{2} +1. \] Thus, our construction provides an explicit quadratic upper bound in terms of the orders of $G$ and $H$. For finite groups $G$ and $H$, let $\sigma(G,H)$ denote the minimum cardinality of a connected finite $T_0$-space admitting a semiflow with \[ \operatorname{Aut}(X)\cong G \qquad\text{and}\qquad \operatorname{Aut}(\varphi_t(X))\cong H \] for every $t>0$. The above construction shows that $\sigma(G,H)\leq |X|$. Determining the exact value of $\sigma(G,H)$, or obtaining sharper upper and lower bounds, remains an interesting open problem. The space produced by our construction has height \[ \operatorname{ht}(X)=|S_G|+|H|+4, \] where the height of a finite $T_0$-space is the maximum number of strict inequalities in a chain.

Our construction makes use of the finite $T_0$-space introduced in \cite{barmak2009automorphism}. In that paper, the authors showed that every finite group $G$ can be realized as the automorphism group of a finite $T_0$-space with $|G|(|S_G|+2)$ points, where $S_G$ is a generating set of $G$. Although this construction is not optimal, several improvements have subsequently appeared. For example, the construction in \cite{barmak2020automorphism2} requires only $4|G|$ points, while the one in \cite{babai1980finite} uses $3|G|$ points. More recently, the minimum number of points required to realize a finite cyclic group was determined in \cite{barmak2024smallest}. Consequently, by replacing the initial realization of $G$ and $H$ with some of these more economical constructions and adapting our methods accordingly, one could obtain better bounds for $|X|$. We have opted for the approach of \cite{barmak2009automorphism} because it leads to a simpler and more transparent proof. A similar phenomenon occurs with respect to height. For instance, \cite{costoya2018realizability} provides realizations of arbitrary finite groups by posets of height~$1$.

The space $X$ arising from the proof of Theorem~\ref{thm_main_result} is contractible. However, the construction can be modified so as to prescribe the weak homotopy type of $X$. More precisely, we obtain the following consequence.

\begin{cor}\label{cor_prescribed_weak_type} Let $G$ and $H$ be finite groups, and let $K$ be a finite simplicial complex. Then there exist a connected finite $T_0$-space $X$ and a semiflow \[ \varphi\colon \mathbb{R}_0^+\times X\longrightarrow X \] such that \[ \operatorname{Aut}(X)\cong G, \qquad \operatorname{Aut}(\varphi_t(X))\cong H \] for every $t>0$, and $X$ is weakly homotopy equivalent to $|K|$. \end{cor}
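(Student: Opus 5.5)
Our plan is to reduce the corollary to Theorem~\ref{thm_main_result} by gluing a rigid, weakly-$|K|$ piece onto a part of the space where it interacts with neither the automorphism groups nor the semiflow. Let $X_0$ and $\varphi^0$ be the contractible space and semiflow given by the proof of Theorem~\ref{thm_main_result}. Let $\mathcal{X}(K)$ be the face poset of $K$, which is weakly homotopy equivalent to $|K|$ by McCord's theorem. Its automorphism group may be nontrivial, so we first replace it by a rigid model $Y$ with $\operatorname{Aut}(Y)=1$ and $Y\simeq_w|K|$.

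To build $Y$, we attach to each point of $\mathcal{X}(K)$ a ``tag'' that kills symmetry. A convenient way is to fix a linear order $y_1,\dots,y_n$ of the points and to attach below $y_i$ a chain of length $N+i$ of new points, with $N$ larger than the height of everything else in the construction. Each tail is an up-beat/down-beat collapsible piece, so attaching it does not change the weak homotopy type (indeed not even the homotopy type). Chains of distinct lengths cannot be permuted by a homeomorphism, which preserves heights of points and beat structure. Hence every automorphism fixes each tail pointwise, and therefore fixes each $y_i$, so $\operatorname{Aut}(Y)=1$.

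Next we glue $Y$ to $X_0$ at a single point. We choose a point $p$ of $X_0$ that is fixed by every automorphism of $X_0$ and fixed by $\varphi^0_t$ for all $t$, and such that $p$ stays in $\varphi^0_t(X_0)$. The natural candidate is the special extra point in the count $1+\cdots$, for example the global minimum or maximum joining the $G$-part and the $H$-part. We then take a minimal point $q$ of $Y$ and form $X=X_0\vee Y$ by identifying $p$ with $q$ and imposing no further relations. A wedge of finite spaces at a point is weakly equivalent to the wedge of the realizations, so $X\simeq_w |K|$ because $X_0$ is contractible. The semiflow is defined by $\varphi_t=\varphi^0_t$ on $X_0$ and the identity on $Y$. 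Continuity holds because both pieces are open/closed compatible at the wedge point, so order preservation can be checked separately on each piece, and $p$ is fixed. The semigroup property is immediate. Then $\varphi_t(X)=\varphi^0_t(X_0)\vee Y$.

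The main obstacle is showing that $\operatorname{Aut}(X_0\vee Y)\cong\operatorname{Aut}(X_0)\times\operatorname{Aut}(Y)$, and likewise for the image. This requires every homeomorphism of the wedge to preserve each summand. We will enforce it by choosing $N$ so large that the long tails of $Y$ are distinguishable from anything in $X_0$ or $\varphi^0_t(X_0)$ by height: the points of maximal height, or the longest chains, all lie in $Y$. We also want $Y\setminus\{q\}$ and $X_0\setminus\{p\}$ to have different local structure, so that the wedge point is characterized as the unique cut point of the appropriate type. If needed, we add a separate long rigid chain as the connector between $p$ and $q$, which makes the splitting canonical. With the splitting in hand, $\operatorname{Aut}(X)\cong G\times 1$ and $\operatorname{Aut}(\varphi_t(X))\cong H\times1$ for $t>0$, which proves the corollary.
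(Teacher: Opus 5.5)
You have the right overall shape, and the semiflow and the weak homotopy type of $X_0\vee Y$ are fine. But there is a genuine gap exactly where you flag it. Everything depends on every homeomorphism of $X_0\vee Y$, and of $\varphi_t(X_0)\vee Y$, preserving the two summands, and for this you give intentions rather than an argument. The mechanisms you suggest do not work as stated:

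\begin{itemize}
\item \emph{Heights.} They cannot separate the summands, because the tails of $Y$ contain points of every small height. Knowing that the longest chains lie in $Y$ says nothing about where a homeomorphism sends $X_G$ or $X'_H$.
\item \emph{Cut point.} The wedge point $w$ is not a cut point of the finite space. By transitivity $X'_H<X_G$ and $X'_H<F^Y_q\setminus\{q\}$, so $w$ does not separate $X_0\setminus\{w\}$ from $Y\setminus\{w\}$. It is a cut vertex of the Hasse diagram, but so is every interior point of every tail of $Y$. The unspecified ``appropriate type'' is therefore precisely what would have to be defined and shown to be invariant.
\item \emph{The tail through $q$.} Since $q$ is minimal in $Y$, i.e.\ the bottom of a tail, the wedge places $X'_H$ (in $X$) and $X_H$ (in $\varphi_t(X)$) underneath that tail. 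The tail-length bookkeeping that made $Y$ rigid therefore changes and must be redone in both spaces. For instance, for $H$ trivial $X_H$ is a chain, so that tail simply becomes longer and may match another one.
\end{itemize}

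The rigidity of $Y$ is also false as stated. Let $K$ be a circle with a whisker, and order its faces so that the free vertex $v=y_i$ is immediately followed by its unique edge $y_{i+1}$. Let $c$ be the top of the tail attached to $y_{i+1}$. Then $U_v$ and $U_c$ are chains with the same number of points, and each of $v$, $c$ is covered only by the maximal point $y_{i+1}$. Exchanging them is a nontrivial automorphism of $Y$, although $|K|$ is not contractible.

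This is repairable, for instance by leaving gaps of at least two between tail lengths, but it shows that ``distinct lengths'' is not yet an argument. Moreover, your tails consist of beat points, so they look locally like the chains $I_{h_i}$ and the segments $(h,-1)<(h,0)$ of $X_H$. That makes separating the summands harder rather than easier.

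What is missing is a homeomorphism invariant that distinguishes the attached piece from $X_0$. The paper supplies it as follows. It takes the rigid model $Y$ of \cite{chocano2020some}, which has no beat points but does contain weak beat points that are not beat points. It attaches $Y$ through an extra point $\overline{*}$ with $y<\overline{*}>*$, rather than by a wedge. It then uses that homeomorphisms preserve beat points, weak beat points that are not beat points, and heights (Proposition~\ref{prop_properties_homeomorphism}) to keep the two parts apart.
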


The construction relies on two main ingredients. The first is the theory of finite topological spaces, which has proved to be a useful framework for addressing realization problems; see, in chronological order, \cite{barmak2009automorphism,chocano2020topological, chocano2020some,barmak2020automorphism2, viruel2024permutation,chocano2025realizing, barmak2024smallest}. The second is provided by recent developments in the study of dynamical systems on finite topological spaces; see, also in chronological order, \cite{barmak2011lefschetz,BarmakMrozekWanner2024conley, ChocanoMoronRuiz2025,Chocano2025semiflows} and the references therein. The paper is organized as follows. In Section~\ref{sec_preliminaries}, we review the necessary background on finite topological spaces and semiflows. In Section~\ref{sec_main_result}, we give the construction and prove the main results.

\section{Preliminaries}\label{sec_preliminaries}

We refer the reader to~\cite{barmak2011algebraic} for an excellent exposition of the theory of finite topological spaces. In this section, we recall and introduce the essential notions required to prove the main result. All of them can be found in, or deduced from, the aforementioned reference.

The following result is fundamental in the theory of finite topological spaces.

\begin{thm}
    The category of finite $T_0$-spaces (with continuous maps) is isomorphic to the category of finite partially ordered sets (with order-preserving maps). 
\end{thm}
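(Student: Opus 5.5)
We prove this classical correspondence (Alexandroff, McCord, Stong) by writing down two explicit functors and checking that they are mutually inverse on objects and act as the identity on morphisms.

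\emph{From spaces to posets.} Let $X$ be a finite $T_0$-space. For each $x\in X$ let $U_x$ be the intersection of all open sets containing $x$. Since the topology is finite, $U_x$ is a finite intersection of open sets, so it is open; it is the minimal open neighbourhood of $x$. Define $x\le y$ if and only if $U_x\subseteq U_y$, which is the same as saying $x\in U_y$. This relation is reflexive and transitive. It is antisymmetric exactly because of $T_0$: if $U_x=U_y$ with $x\neq y$, then no open set separates $x$ from $y$.

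\emph{From posets to spaces.} Let $P$ be a finite poset. Give it the topology whose open sets are the down-sets. Down-sets are closed under arbitrary unions and intersections, so this is a topology. The minimal open set of $x$ is the principal down-set $\downarrow x$. The space is $T_0$: if $x\neq y$, antisymmetry gives, say, $x\not\le y$, and then $\downarrow y$ is an open set containing $y$ but not $x$.

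\emph{The two constructions are mutually inverse on objects.}
\begin{itemize}
\item Starting from a poset, we have $\downarrow x\subseteq\downarrow y$ if and only if $x\le y$, so the order recovered from the down-set topology is the original one.
\item Starting from a space, every open set $V$ equals $\bigcup_{x\in V}U_x$. By the definition of the order, $V$ is then a down-set. Conversely, any down-set $D$ satisfies $D=\bigcup_{x\in D}U_x$, since $U_x=\downarrow x$. This is a union of open sets, so $D$ is open. Hence the down-set topology of the associated poset is the original topology.
\end{itemize}

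\emph{Morphisms.} We show that a map $f\colon X\to Y$ between finite $T_0$-spaces is continuous if and only if it is order-preserving.
\begin{itemize}
\item Suppose $f$ is continuous and $x\le y$. The set $f^{-1}(U_{f(y)})$ is open and contains $y$, so it contains $U_y$. Since $x\in U_y$, we get $f(x)\in U_{f(y)}$, that is, $f(x)\le f(y)$.
\item Suppose $f$ is order-preserving. The preimage of a down-set under $f$ is again a down-set, so $f$ is continuous.
\end{itemize}
Identities and composition are literally the same maps of sets in both categories. The assignments are therefore functors that are the identity on underlying sets and on maps, and they are mutually inverse. This gives an isomorphism of categories, not merely an equivalence.

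The only point requiring care is the use of finiteness: it is what makes $U_x$ open, and hence what makes every open set a union of minimal open sets. Everything else is routine checking.
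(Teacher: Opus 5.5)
Your proof is correct, and it is the classical Alexandroff correspondence: $U_x$ is open by finiteness, antisymmetry comes from $T_0$, the open sets are exactly the down-sets, and continuity is equivalent to monotonicity. The paper gives no proof of its own. It quotes this as a standard fact from Barmak's book, where it is proved essentially as you do and with the same convention $U_x=\{y\in X\mid y\le x\}$ that the paper adopts.
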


This result allows us to identify objects from both categories and establish equivalences between various notions. From now on, we will not distinguish between a finite topological $T_0$-space and a finite partially ordered set (or poset), and we will treat them as the same object without further mention.

Let $f, g : X \rightarrow Y$ be two continuous maps between finite $T_0$-spaces. We write $f \leq g$ whenever $f(x) \leq g(x)$ for every $x \in X$.

\begin{thm}
Let $f, g : X \rightarrow Y$ be two continuous maps between finite $T_0$-spaces. Then $f$ is homotopic to $g$ if and only if there exists a finite sequence of continuous maps $f_i : X \rightarrow Y$ for $i = 0, \ldots, n$ such that $f_0 = f$, $f_n = g$, and $f_i \geq f_{i+1}$ or $f_i \leq f_{i+1}$ for all $i = 0, \ldots, n-1$.
\end{thm}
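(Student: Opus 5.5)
The statement to prove is the classical characterization: $f\simeq g$ if and only if $f$ and $g$ are joined by a finite fence of comparable continuous maps. The approach uses the identification of homotopies with paths in the mapping space $Y^X$. Give $Y^X$, the set of continuous maps $X\to Y$, the compact-open topology. Since $X$ is finite, hence locally compact, the exponential law holds: a homotopy $H\colon X\times[0,1]\to Y$ is the same as a continuous path $[0,1]\to Y^X$. The key intermediate claim is that this compact-open topology is exactly the finite $T_0$ topology associated with the pointwise order $f\le g$. Granting the claim, $f\simeq g$ means that $f$ and $g$ lie in the same path component of the finite poset $Y^X$. It then remains to show that path components of a finite $T_0$-space are the components of the comparability graph.

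For the claim, I would check subbasic opens. In the finite setting the relevant subbasic sets are $\{h : h(U_x)\subseteq U_y\}$. Because $h$ is order-preserving and $U_x$ has maximum $x$, the condition $h(U_x)\subseteq U_y$ reduces to $h(x)\le y$. Hence the minimal open set of $f$ in $Y^X$ is
\[
\bigcap_{x\in X}\{h : h(x)\le f(x)\}=\{h : h\le f\}.
\]
This is exactly the down-set of $f$ for the pointwise order. (With the convention that open sets are down-sets, i.e. $U_x$ is the minimal open neighbourhood of $x$.)

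For the path-component step, I would argue in both directions.

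\emph{Comparable points are joined by a path.} If $a\le b$ in a finite $T_0$-space, the map $\gamma(t)=a$ for $t<1$ and $\gamma(1)=b$ is continuous. The preimage of an open down-set containing $b$ is all of $[0,1]$, and one that contains $a$ but not $b$ has preimage $[0,1)$. Concatenating such paths shows that maps joined by a fence $f_0\le f_1\ge f_2\cdots$ are homotopic. Equivalently, and directly, $H(x,t)=f_i(x)$ for $t<1$ and $H(x,1)=f_{i+1}(x)$ is continuous when $f_i\le f_{i+1}$, and homotopies compose.

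\emph{A path stays in one comparability component.} Let $\gamma\colon[0,1]\to P$ be a path into the finite poset $P=Y^X$. For each $p$, the set $\gamma^{-1}(U_p)$ is open. The components of the comparability graph are both open and closed: each is a union of down-sets $U_p$, and its complement is likewise a union of such sets. Since $[0,1]$ is connected, $\gamma$ lands in a single component. So a homotopy from $f$ to $g$ forces a fence $f=f_0,\dots,f_n=g$ with consecutive maps comparable.

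The main obstacle is the exponential law. I must justify that continuity of $H\colon X\times I\to Y$ is equivalent to continuity of the adjoint $I\to Y^X$ for the topology above. I would avoid general compact-open machinery and verify this directly. $H$ is continuous iff each $H^{-1}(U_y)$ is open in $X\times I$. Since the minimal neighbourhood of $(x,t)$ contains $U_x\times W$ for small neighbourhoods $W$ of $t$, this amounts to the following. For each $t$ there is a neighbourhood $W$ with $H(x,s)\le H(x,t)$ for all $x$ and all $s\in W$, using monotonicity in $x$. That is precisely continuity of $t\mapsto H(\cdot,t)$ into the poset $Y^X$ with minimal open sets $\{h\le f\}$.
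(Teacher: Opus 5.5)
The paper does not prove this statement; it is recalled as standard background from \cite{barmak2011algebraic}, where it goes back to Stong. Your argument is correct and is essentially the standard proof given there: the compact-open topology on $Y^X$ is the topology of the pointwise order, homotopies correspond to paths in $Y^X$, and path components of a finite $T_0$-space are the components of its comparability graph.

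Two small points of precision:
\begin{itemize}
\item $X\times[0,1]$ has no minimal neighbourhoods. What you actually use is that the sets $U_x\times W$ form a neighbourhood base at $(x,t)$. A single $W$ valid for all $x$ is then obtained by intersecting the finitely many $W_x$, and this is where finiteness of $X$ enters.
\item You check the adjunction directly for the pointwise-order topology, so the identification with the compact-open topology is not actually needed. If you keep it, justify the reduction to the sets $\{h : h(x)\le y\}$ via
\[
\{h : h(K)\subseteq W\}=\bigcap_{x\in K}\bigcup_{y\in W}\{h : h(x)\le y\}.
\]
\end{itemize}
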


Given a finite $T_0$-space $X$ and a point $x \in X$, we denote by $U_x$ ($F_x$)the intersection of all open (closed) sets containing $x$. Equivalently, $U_x = \{ y \in X \mid y \leq x \}$ ($F_x= \{ y \in X \mid x \leq y \}$).  The \emph{height} of $X$, denoted by $\textnormal{ht}(X)$, is one less than the maximum number of elements in a chain of $X$. The height of a point $x \in X$ is defined as $\textnormal{ht}(x) := \textnormal{ht}(U_x)$.

The \emph{Hasse diagram} of $X$ is a directed graph whose vertices are the points of $X$, with a directed edge from $x$ to $y$ if and only if $x < y$ and there is no $z$ such that $x < z < y$.

Let $Y$ be a subspace of a topological space $X$, and let $i:Y\hookrightarrow X$ be the inclusion. We say that $Y$ is a \emph{strong deformation retract} of $X$ if there exist a map $r:X\rightarrow Y$ and a homotopy $H:X\times [0,1]\rightarrow X$ from $\mathrm{id}_X$ to $i\circ r$ such that $r\circ i=\mathrm{id}_Y$ and $H(y,t)=y$ for every $y\in Y$ and $t\in [0,1]$. For simplicity, we shall sometimes refer to $r$ as a \emph{strong deformation retraction}, although this departs from the standard terminology, in which a strong deformation retraction refers to the homotopy $H$ rather than to the map $r$.

A point $x \in X$ is called a \emph{down beat point} (\emph{up beat point}) if $U_x \setminus \{x\}$ ($F_x \setminus \{x\}$) has a maximum (minimum), which will be denoted by $\overline{x}$ ($\underline{x}$). \begin{thm}\label{thm_strong_retraction} Let $X$ be a finite $T_0$-space and let $y\in X$ be a down beat point (up beat point). Then the map $r:X\to X\setminus\{y\}\subseteq X$ defined by $r(x)=x$ for $x\neq y$ and $r(y)=\overline{y}$ ($r(y)=\underline{y}$) is a strong deformation retraction. \end{thm}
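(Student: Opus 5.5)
Continuity of $r$ is the first thing to settle; the homotopy will then come from the comparison criterion for homotopies between maps of finite spaces stated above. I treat the down beat point case; the up beat case is dual, obtained by reversing the order, which exchanges $U_x$ with $F_x$ and maxima with minima. By the identification of finite $T_0$-spaces with posets, continuity means that $r$ preserves the order, so I check that $x\le x'$ implies $r(x)\le r(x')$. If neither point equals $y$ there is nothing to show. If $x=y<x'$, then $x'\neq y$ and I need $\overline{y}\le x'$. This holds because $\overline{y}<y<x'$. If $x<x'=y$, then $x\in U_y\setminus\{y\}$, and since $\overline{y}$ is the maximum of that set, $x\le\overline{y}=r(y)$. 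The case $x=x'=y$ is trivial. So $r\colon X\to X\setminus\{y\}$ is continuous. It is also well defined, since $\overline{y}\neq y$. Clearly $r\circ i=\mathrm{id}_{X\setminus\{y\}}$.

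Next I produce the homotopy. Note that $i\circ r\le \mathrm{id}_X$ pointwise: equality holds off $y$, and $\overline{y}<y$. By the earlier theorem characterizing homotopies through finite fences of comparable maps, $i\circ r$ is homotopic to $\mathrm{id}_X$. However, the statement asks for a \emph{strong} deformation, relative to $X\setminus\{y\}$, so I should write the homotopy explicitly rather than merely cite that theorem. For two comparable maps $f\le g\colon X\to Y$, I define $H(x,t)=g(x)$ for $t<1$ and $H(x,1)=f(x)$. I must check that this is continuous on $X\times[0,1]$. Take an open set $V\subseteq Y$; it is a down-set. If $x\in g^{-1}(V)$, then $f(x)\le g(x)$ gives $f(x)\in V$, so $g^{-1}(V)\subseteq f^{-1}(V)$. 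Hence
\[
H^{-1}(V)=\bigl(g^{-1}(V)\times[0,1]\bigr)\cup\bigl(f^{-1}(V)\times\{1\}\bigr)\cup\bigl(f^{-1}(V)\times[0,1)\cap g^{-1}(V)\times[0,1)\bigr).
\]
This simplifies to $\bigl(g^{-1}(V)\times[0,1]\bigr)\cup\bigl((f^{-1}(V)\setminus g^{-1}(V))\times\{1\}\bigr)$.

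I still need this union to be open in the product, and here the direction of the inequality matters. I use the other convention instead: $H(x,t)=f(x)$ for $t<1$ and $H(x,1)=g(x)$, taking $f=i\circ r$ and $g=\mathrm{id}$. Then
\[
H^{-1}(V)=\bigl(f^{-1}(V)\times[0,1)\bigr)\cup\bigl(g^{-1}(V)\times\{1\}\bigr)=\bigl(f^{-1}(V)\times[0,1)\bigr)\cup\bigl(g^{-1}(V)\times[0,1]\bigr),
\]
using $g^{-1}(V)\subseteq f^{-1}(V)$. This is open. Reparametrizing by $t\mapsto 1-t$ yields a homotopy from $\mathrm{id}_X$ to $i\circ r$. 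Moreover, $H(x,t)=x$ for every $x\neq y$ and all $t$, because $f$ and $g$ agree on $X\setminus\{y\}$. This gives the strong deformation retraction.

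The only delicate step is choosing the side of the jump in $t$ so that preimages of open sets (down-sets) remain open. I would verify this carefully as above. For the up beat case the inequality becomes $\mathrm{id}\le i\circ r$, and the same construction applies with the roles of $f$ and $g$ swapped.
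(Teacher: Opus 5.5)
Your proof is correct and is the standard argument (due to Stong, as presented in Barmak's book); the paper gives no proof of its own and simply recalls this result from that reference. As there, you check that $r$ is order-preserving, observe $i\circ r\le \mathrm{id}_X$ with equality on $X\setminus\{y\}$, and realize the comparison by the step homotopy equal to the smaller map for $t<1$ and the larger map at $t=1$, which supplies the relative version of the comparison criterion needed for a strong deformation (the discarded first convention can be dropped from a final write-up).
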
 
Note that, under the hypotheses of Theorem~\ref{thm_strong_retraction} for the case of a down beat point, we have $\mathrm{id}_X \ge r$, where $\mathrm{id}_X$ denotes the identity map on $X$.

A point $x \in X$ is called a \emph{weak down beat point} (\emph{weak up beat point}) if $U_x \setminus \{x\}$ ($F_x \setminus \{x\}$) is contractible. 

\begin{thm}
    Let $X$ be a finite $T_0$-space and $x$ a weak beat point. Then $X\setminus \{x\}$ has the same weak homotopy type as $X$.
\end{thm}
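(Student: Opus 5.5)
The statement to prove is: if $x$ is a weak beat point of a finite $T_0$-space $X$, then $X\setminus\{x\}$ has the same weak homotopy type as $X$. I will show that the inclusion $i\colon X\setminus\{x\}\hookrightarrow X$ is a weak homotopy equivalence, using McCord's basis-like open cover theorem. By symmetry (pass to the opposite order $X^{op}$, which has the same associated simplicial complex and hence the same weak homotopy type), it suffices to treat a weak down beat point, where $\hat U_x:=U_x\setminus\{x\}$ is contractible.

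The tool is McCord's criterion. Let $f\colon Y\to Z$ be continuous, and let $\mathcal U$ be an open cover of $Z$ closed under finite intersections, or more generally a basis-like cover. If every restriction $f^{-1}(U)\to U$ with $U\in\mathcal U$ is a weak homotopy equivalence, then $f$ is one. I apply this to $i$ with the cover $\{U_y\}_{y\in X}$ of minimal open sets. This cover is basis-like, since $U_y\cap U_z$ is the union of the $U_w$ it contains.

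The local pieces split into two cases.
\begin{itemize}
\item If $y\neq x$ and $x\notin U_y$, then $i^{-1}(U_y)=U_y$, so the restriction is the identity.
\item If $y\ne x$ and $x<y$, then $i^{-1}(U_y)=U_y\setminus\{x\}$. This still has maximum $y$, so it is contractible, and $U_y$ is contractible too. Hence the restriction is a map between contractible spaces, which is a weak equivalence.
\item If $y=x$, then $i^{-1}(U_x)=\hat U_x$, which is contractible by hypothesis, while $U_x$ has maximum $x$ and is contractible. So again the map is between contractible spaces.
\end{itemize}

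By McCord's theorem, $i$ is a weak homotopy equivalence. The main point to get right is the applicability of McCord's theorem. I would cite it from \cite{barmak2011algebraic} rather than reprove it. The only other facts used are that finite spaces with a maximum or minimum are contractible, and that contractible implies weakly contractible.

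An alternative route avoids McCord's theorem but needs a simplicial argument. Via the order complex, $\mathcal K(X)=\mathcal K(X\setminus\{x\})\cup (x*\mathcal K(\hat U_x\cup \hat F_x))$, and the link of $x$ is the join $\mathcal K(\hat U_x)*\mathcal K(\hat F_x)$. That join is contractible because $\mathcal K(\hat U_x)$ is. Gluing a cone along a contractible subcomplex preserves homotopy type, which gives the same conclusion.
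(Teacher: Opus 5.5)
The paper does not prove this statement; it recalls it in the preliminaries from Barmak's book \cite{barmak2011algebraic}. Your argument, which applies McCord's theorem to the inclusion $X\setminus\{x\}\hookrightarrow X$ with the basis-like cover $\{U_y\}_{y\in X}$ and reduces the up case to $X^{op}$, is correct and is essentially the standard proof given in that reference. Two small remarks: the step you leave implicit is that $(\hat F_x)^{op}$ is contractible whenever $\hat F_x$ is (reversing the order turns a fence of comparable maps from the identity to a constant map into another such fence), which is exactly what makes $x$ a weak down beat point of $X^{op}$; and your alternative simplicial route does not really avoid McCord, since it still needs $|\mathcal K(X)|\to X$ to be a weak homotopy equivalence.
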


Note that every beat point is a weak beat point, but the converse is false.

Let us recall in the following proposition some trivial consequences of the notion of homeomorphisms within this setting.

\begin{prop}\label{prop_properties_homeomorphism} Let $X$ be a finite $T_0$-space and let $f:X\to X$ be a homeomorphism. \begin{enumerate} \item If $x\in X$ is a down beat point (up beat point), then $f(x)$ is also a down beat point (up beat point). \item If $x\in X$ is a weak beat point that is not a beat point, then $f(x)$ is a weak beat point that is not a beat point. \item For every $x\in X$, $\mathrm{ht}(x)=\mathrm{ht}(f(x))$. \end{enumerate} \end{prop}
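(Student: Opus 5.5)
The key observation is that a homeomorphism $f\colon X\to X$ of a finite $T_0$-space is exactly an order automorphism: $f$ and $f^{-1}$ are both order-preserving bijections. Hence $f$ satisfies $x<y \iff f(x)<f(y)$, so it maps the down set $U_x$ bijectively and order-isomorphically onto $U_{f(x)}$, and $F_x$ onto $F_{f(x)}$. Everything then follows by restriction: $f$ induces order isomorphisms $U_x\setminus\{x\}\cong U_{f(x)}\setminus\{f(x)\}$ and $F_x\setminus\{x\}\cong F_{f(x)}\setminus\{f(x)\}$.

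For item 1, suppose $U_x\setminus\{x\}$ has a maximum $\overline{x}$. Then $f(\overline{x})$ is the maximum of $f(U_x\setminus\{x\})=U_{f(x)}\setminus\{f(x)\}$, so $f(x)$ is a down beat point with $\overline{f(x)}=f(\overline{x})$. The up beat case is identical using $F_x$ and minima.

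For item 2, the restriction of $f$ is a homeomorphism between the subspaces $U_x\setminus\{x\}$ and $U_{f(x)}\setminus\{f(x)\}$. This is because the subspace topology on a subset of a finite $T_0$-space corresponds to the induced order. Contractibility is invariant under homeomorphism, so $f(x)$ is a weak down beat point whenever $x$ is; similarly for the up case.

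To see that $f(x)$ is not a beat point, apply item 1 to $f^{-1}$. If $f(x)$ were a beat point, then $x=f^{-1}(f(x))$ would be one too, a contradiction. I note the statement's "weak beat point that is not a beat point" should be read with matching types (down/up). Since $f$ preserves both down and up structures separately, the argument covers any reading.

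For item 3, $\mathrm{ht}(x)=\mathrm{ht}(U_x)$, and $U_x\cong U_{f(x)}$ as posets. An order isomorphism maps chains to chains of the same length bijectively, so the maximal chain lengths agree.

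No step is a genuine obstacle. The only point needing care is the identification of subspace topology with the induced order, together with the use of $f^{-1}$ for the negative part of item 2.
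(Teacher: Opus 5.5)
Your proof is correct and is the routine verification the paper intends. The paper gives no proof at all, calling these trivial consequences of the fact that a homeomorphism of a finite $T_0$-space is an order automorphism carrying $U_x$ and $F_x$ onto $U_{f(x)}$ and $F_{f(x)}$, with $f^{-1}$ handling the negative part of item 2. The subspace-topology point you flag is not actually needed, since any homeomorphism restricts to a homeomorphism from a subset onto its image.
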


Let $X$ and $Y$ be two finite $T_0$-spaces. The \emph{non-Hausdorff join} $X\circledast Y$ is the disjoint union $X\sqcup Y$ keeping the given ordering within $X$ and $Y$ and setting $x\leq y$ for every $x\in X$ and $y\in Y$. 
\begin{prop}\label{prop_automorfismos_join}
    Let $X$ and $Y$ be two finite $T_0$-spaces. Then $\textnormal{Aut}(X\circledast Y)=\textnormal{Aut}(X)\times \textnormal{Aut}(Y)$.
\end{prop}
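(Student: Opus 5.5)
The plan is to show that every automorphism $f$ of $X\circledast Y$ satisfies $f(X)=X$ and $f(Y)=Y$. Once that is known, the restriction map $f\mapsto (f|_X,f|_Y)$ is a group homomorphism $\operatorname{Aut}(X\circledast Y)\to\operatorname{Aut}(X)\times\operatorname{Aut}(Y)$, and it is clearly injective. Surjectivity is immediate: given $g\in\operatorname{Aut}(X)$ and $h\in\operatorname{Aut}(Y)$, the map $g\sqcup h$ preserves the order inside $X$ and inside $Y$. It also preserves the relations $x\le y$ for $x\in X$, $y\in Y$, because $g\sqcup h$ sends $X$ to $X$ and $Y$ to $Y$. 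Its inverse $g^{-1}\sqcup h^{-1}$ is order-preserving for the same reason. If $X$ or $Y$ is empty there is nothing to prove, so we assume both are nonempty.

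To prove the invariance of $X$, I will use the \emph{incomparability graph} $\Gamma$ of $Z=X\circledast Y$. Its vertices are the points of $Z$, with an edge between $z\neq z'$ whenever $z$ and $z'$ are incomparable. Any homeomorphism of $Z$, being an order automorphism, preserves comparability and hence induces a graph automorphism of $\Gamma$. It therefore permutes the connected components of $\Gamma$. Since every point of $X$ is comparable with every point of $Y$, $\Gamma$ has no edges between $X$ and $Y$. Hence each connected component $C$ of $\Gamma$ lies entirely in $X$ or entirely in $Y$.

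Next I will show that the components are linearly ordered by the relation $C\prec C'$ if and only if $c<c'$ for all $c\in C$, $c'\in C'$. Two points in different components are comparable. The key claim is that if $c<c'$ for one pair with $c\in C$, $c'\in C'$, then the same holds for every pair. I would prove this by moving along edges. Suppose $d\in C$ is incomparable with $c$ and $c'<d$. Then $c<c'<d$ gives $c<d$, a contradiction, so $d<c'$. The same argument moves the other endpoint within $C'$. Connectedness of $C$ and $C'$ then gives the claim. Antisymmetry and transitivity of $\prec$ follow immediately, so $\prec$ is a total order on the finite set of components.

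Since $f$ maps components to components and preserves strict inequalities, it induces an automorphism of the finite chain of components, and such an automorphism must be the identity. Thus $f(C)=C$ for every component $C$. As $X$ is a union of components, $f(X)=X$, and likewise $f(Y)=Y$. The main obstacle I expect is the well-definedness of the order $\prec$ in the previous paragraph. It is the only step needing an argument beyond unwinding definitions, and it is what prevents an automorphism from exchanging a part of $X$ with a part of $Y$.
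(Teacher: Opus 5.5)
Your proof is correct, but there is no proof in the paper to match it against. The proposition appears in the preliminaries as a standard fact that can be ``found in, or deduced from'' Barmak's book.

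Your route splits $X\circledast Y$ into connected components of the incomparability graph, shows that these components are totally ordered by $\prec$, and then uses the rigidity of finite chains. This is the standard ordinal-sum decomposition argument, and it proves more than is asked: every automorphism fixes setwise each join-indecomposable summand. So the automorphism group of an iterated join is the product of the automorphism groups of its indecomposable pieces.

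For the statement at hand, the obstacle you anticipate (well-definedness of $\prec$) can be avoided entirely. Use item 3 of Proposition~\ref{prop_properties_homeomorphism}, stated just before this proposition: homeomorphisms preserve the height of points.
\begin{itemize}
\item For $x\in X$ one has $U_x\subseteq X$, so $\operatorname{ht}(x)\le\operatorname{ht}(X)$.
\item For $y\in Y$ one has $X\subseteq U_y$. Appending $y$ to a longest chain of $X$ gives $\operatorname{ht}(y)\ge\operatorname{ht}(X)+1$.
\end{itemize}
Hence $X=\{z\in X\circledast Y:\operatorname{ht}(z)\le\operatorname{ht}(X)\}$ is invariant under every automorphism, and so is its complement $Y$. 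Your restriction homomorphism, the injectivity, and the surjectivity via $g\sqcup h$ then go through unchanged.

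The height argument is the most economical proof available from the paper's own preliminaries. Your argument is longer but self-contained, and it yields the finer decomposition.
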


We now recall the main results from~\cite{Chocano2025semiflows}. A \emph{semiflow} $\varphi$ on a topological space $X$ is a continuous map
\[
\varphi : \mathbb{R}_0^+ \times X \rightarrow X,
\]
where $\mathbb{R}_0^+$ denotes the set of non-negative real numbers, satisfying:
\begin{enumerate}
    \item $\varphi(0, x) = x$ for every $x \in X$,
    \item $\varphi(t, \varphi(s, x)) = \varphi(t + s, x)$ for every $x \in X$ and $s, t \in \mathbb{R}_0^+$.
\end{enumerate}
For simplicity, given $t \in \mathbb{R}_0^+$, we denote by $\varphi_t$ the map $\varphi(t, \cdot) : X \rightarrow X$.

\begin{thm}\label{thm_semiflow_existence}
    Let \( X \) be a finite $T_0$-space. If \( X \) does not have down beat points and \(\varphi : \mathbb{R}_{0}^+ \times X \to X \) is a semiflow, then \( \varphi_0=\varphi_t \) for every $t\in \mathbb{R}_0^+$.
\end{thm}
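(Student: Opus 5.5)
The plan is to use continuity in the time variable together with the absence of down beat points. Fix $x\in X$ and look at the orbit map $\gamma_x\colon \mathbb{R}_0^+\to X$, $\gamma_x(t)=\varphi_t(x)$. A continuous map from an interval to a finite $T_0$-space is tightly constrained. Preimages of the minimal open sets $U_y$ are open, and preimages of the closed sets $F_y$ are closed. So near any time $t_0$ we have $\varphi_t(x)\in U_{\varphi_{t_0}(x)}$ for $t$ close to $t_0$; that is, $\varphi_t(x)\le\varphi_{t_0}(x)$ in a neighbourhood of $t_0$. In particular, starting at $t_0=0$ there is $\varepsilon>0$ with $\varphi_t\le \mathrm{id}_X$ pointwise for all $t\in[0,\varepsilon)$, after taking the minimum over the finitely many points.

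The first step is to show $\varphi_t=\mathrm{id}_X$ for small $t$. Fix $t\in(0,\varepsilon)$. Then $\varphi_t$ is a continuous self-map with $\varphi_t\le \mathrm{id}_X$. The standard argument in the theory of finite spaces says that a space without down beat points admits no such map other than the identity. Suppose $\varphi_t\neq \mathrm{id}_X$ and choose $x$ minimal with $\varphi_t(x)<x$. For every $y<x$, minimality gives $\varphi_t(y)=y$, and monotonicity gives $y=\varphi_t(y)\le\varphi_t(x)$. So $\varphi_t(x)$ is an element of $U_x\setminus\{x\}$ that lies above every element of $U_x\setminus\{x\}$. It is therefore the maximum of that set, and $x$ is a down beat point, which is a contradiction. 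Hence $\varphi_t=\mathrm{id}_X$ for all $t\in[0,\varepsilon)$.

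The second step propagates this to all times with the semigroup law. Any $t\ge 0$ can be written as a finite sum $t=s_1+\dots+s_k$ with each $s_i\in[0,\varepsilon)$. Then $\varphi_t=\varphi_{s_1}\circ\cdots\circ\varphi_{s_k}=\mathrm{id}_X=\varphi_0$.

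The only delicate point is the local inequality $\varphi_t\le \mathrm{id}_X$ near $0$. This follows directly because $\gamma_x^{-1}(U_x)$ is an open subset of $\mathbb{R}_0^+$ containing $0$, hence contains some $[0,\varepsilon_x)$, and $X$ is finite. Everything else is the minimal-element argument above.
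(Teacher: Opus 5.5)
Your proof is correct and complete. The paper gives no proof of this statement to compare against: it simply quotes it from \cite{Chocano2025semiflows} as background.

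Your argument is self-contained and rests on three ingredients.
\begin{enumerate}
\item Continuity of $t\mapsto\varphi_t(x)$ at $t=0$ gives $\varphi_t\le\mathrm{id}_X$ on some interval $[0,\varepsilon)$. This is the same $U_x$-neighbourhood reasoning the paper uses, in the opposite direction, in the proof of Theorem~\ref{thm_semiflow_description_2}.
\item The standard Stong minimal-element argument shows that a continuous $f\le\mathrm{id}_X$ on a space without down beat points must be the identity.
\item The semigroup law gives $\varphi_t=(\varphi_{t/k})^k$ with $t/k<\varepsilon$, which passes from small times to all times.
\end{enumerate}
The first two steps use only joint continuity and $\varphi_0=\mathrm{id}_X$; the semigroup law enters only in the last step.

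An alternative route within the paper's framework is to deduce the statement from Theorem~\ref{thm_semiflow_description}. A non-trivial semiflow would have $\varphi_t=r\le\varphi_0=\mathrm{id}_X$ for all $t>0$ with $r\neq\mathrm{id}_X$, and your minimal-element argument applied to $r$ then produces a down beat point. That theorem is also imported and is a stronger structural result: it shows $\varphi_t$ is independent of $t>0$ and is a strong deformation retraction. For this particular statement, your direct proof is the more economical one.
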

Thus, the presence of down beat points is a crucial condition for defining non-trivial semiflows on finite spaces.

\begin{thm}\label{thm_semiflow_description}
 Let \( X \) be a finite  $T_0$-space and let \(\varphi : \mathbb{R}_{0}^+ \times X \to X \) be a non-trivial semiflow. Then $\varphi_t=r$ for every positive real value $t$ where $r:X\rightarrow X$ is a strong deformation retraction such that $r\leq \varphi_0$.
\end{thm}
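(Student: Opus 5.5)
The plan is to read off the structure of $\varphi$ from continuity in $t$ and the semigroup law, using only that $X$ is a finite poset. By condition 1 of the definition of a semiflow, $\varphi_0=\mathrm{id}_X$, so what has to be shown is this: all $\varphi_t$ with $t>0$ coincide with one map $r$, which is idempotent, satisfies $r\le \mathrm{id}_X$, and is a strong deformation retraction.

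\emph{Step 1: short times.} Fix $x\in X$. The path $t\mapsto\varphi_t(x)$ is continuous, and $U_x$ is an open set containing $\varphi_0(x)=x$. Hence there is $\varepsilon_x>0$ with $\varphi_t(x)\le x$ for $t<\varepsilon_x$. Since $X$ is finite, $\varepsilon=\min_x\varepsilon_x>0$ gives $\varphi_t\le \mathrm{id}_X$ for all $t\in[0,\varepsilon)$.

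\emph{Step 2: monotonicity.} For any $s\ge0$ and $0\le t<\varepsilon$, write $\varphi_{s+t}=\varphi_s\circ\varphi_t$. Each $\varphi_s$ is continuous, hence order preserving, so $\varphi_t\le\mathrm{id}_X$ gives $\varphi_{s+t}\le\varphi_s$. Therefore $t\mapsto\varphi_t(x)$ is non-increasing in the order of $X$ for every $x$.

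\emph{Step 3: constancy near $0^+$.} This is the step I expect to be the main obstacle, because continuity into a non-Hausdorff space must be handled carefully. Fix $x$ and a value $b$ taken by the path. By monotonicity, the set $\{t:\varphi_t(x)\le b\}$ is an up-set of $[0,\infty)$, and it is the preimage of the open set $U_b$. An open up-set of $[0,\infty)$ not containing $0$ has the form $(t_1,\infty)$.

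Call $t_1$ a change time of the path if the value of the path at $t_1$ differs from its value at all later times. Then $t_1$ is a left endpoint of such a set, so the path is constant on intervals of the form $(t_i,t_{i+1}]$. A non-increasing path in a finite poset has only finitely many change times. Taking the smallest positive change time over all $x$, we get $\delta>0$ such that $\varphi_t=g$ is a fixed map for all $t\in(0,\delta]$. Should this bookkeeping prove delicate, the only fact really needed is this local constancy on some $(0,\delta]$.

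\emph{Step 4: semigroup argument.} For $t\le\delta/2$ we have $g=\varphi_{2t}=\varphi_t\circ\varphi_t=g\circ g$, so $g$ is idempotent. For arbitrary $t>0$, choose $n$ with $t/n<\delta$; then $\varphi_t=(\varphi_{t/n})^n=g^n=g$. Set $r=g$. Then $\varphi_t=r$ for all $t>0$, $r\le\mathrm{id}_X=\varphi_0$ by Step 1, and $r\neq\mathrm{id}_X$ because the semiflow is non-trivial.

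\emph{Step 5: $r$ is a strong deformation retraction.} Let $Y=r(X)$. Idempotence gives $r\circ i=\mathrm{id}_Y$. Because $i\circ r\le\mathrm{id}_X$, the map $H(x,s)=x$ for $s<1$ and $H(x,1)=r(x)$ is continuous; this is the standard argument, as in the homotopy characterization by comparable maps. It is a homotopy from $\mathrm{id}_X$ to $i\circ r$, and it is stationary on $Y$ since $r$ fixes $Y$. This gives the theorem.
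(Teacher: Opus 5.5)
Steps 1--4 are sound. The paper gives no proof of this statement; it quotes it from \cite{Chocano2025semiflows}, so your argument has to stand on its own.

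Step 3 is easier than you expect. By Step 2 the values $\{\varphi_t(x): t>0\}$ form a finite chain, and its maximum is attained at some $t_x>0$. Monotonicity then forces $\varphi_t(x)$ to equal that maximum on all of $(0,t_x]$. Taking $\delta=\min_x t_x$ finishes the step with no further use of continuity.

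The genuine error is in Step 5: when $r\le \mathrm{id}_X$ and $r\neq\mathrm{id}_X$, the map $H(x,s)=x$ for $s<1$, $H(x,1)=r(x)$ is \emph{not} continuous. The standard lemma says that if $f\le g$, then the map equal to $f$ for $s<1$ and to $g$ at $s=1$ is continuous. You applied it with $f=\mathrm{id}_X$ and $g=r$, i.e.\ in the wrong direction.

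For instance, take $X=\{a<b\}$ with $r\equiv a$. The preimage $H^{-1}(U_a)=(\{a\}\times[0,1])\cup\{(b,1)\}$ is not open. Every basic neighbourhood $U_b\times(1-\eta,1]$ of $(b,1)$ contains points $(b,s)$ with $s<1$, and $H$ sends these to $b\notin U_a$. Because open sets are down-sets, continuity forces the value at each time to dominate the values at nearby times. So a decreasing path may drop only immediately \emph{after} a given time, never \emph{at} it, which is exactly the phenomenon behind your Step 3.

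The repair is to put the jump at the start: set $H(x,0)=x$ and $H(x,s)=r(x)$ for $s>0$. By Step 4 this is just $\varphi$ restricted to $[0,1]\times X$, so it is continuous for free. Its continuity at $(0,x)$ is also exactly the computation with $[0,\varepsilon)\times U_x$ and $r(U_x)\subseteq U_x$ in the proof of Theorem~\ref{thm_semiflow_description_2}. It runs from $\mathrm{id}_X$ to $i\circ r$, and it is stationary on $Y=r(X)$ because $r$ fixes $Y$. With this replacement your proof is complete.
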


Note that the map $r$ obtained in Theorem~\ref{thm_strong_retraction} can be used to define a non-trivial semiflow. Under the hypotheses of that theorem, define \(\varphi : \mathbb{R}_{0}^+ \times X \to X \) as follows:
\[
\varphi(t, x) :=
\begin{cases}
x & \text{if } t = 0, \\
r(x) & \text{if } t > 0.
\end{cases}
\]

Indeed, we have the following result:

\begin{thm}\label{thm_semiflow_description_2}
    Let $X$ be a finite $T_0$-space and let $r:X\rightarrow X$ be a strong deformation retraction such that $r\leq \textnormal{id}_X$. Then the map \(\varphi : \mathbb{R}_{0}^+ \times X \to X \) define by $\varphi(0,x)=x$ and $\varphi(t,x)=r(x)$ whenever $t>0$ is a semiflow.
\end{thm}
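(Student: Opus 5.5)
The plan is to check the two semiflow axioms algebraically and then prove continuity of $\varphi$ on $\mathbb{R}_0^+\times X$ (product topology) directly from the open sets. The only point needing the hypothesis $r\leq \mathrm{id}_X$ is continuity at $t=0$; this is where I expect the work to lie.

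\emph{Axioms.} Condition (1), $\varphi(0,x)=x$, holds by definition. For condition (2), write $Y=r(X)$ and let $i\colon Y\hookrightarrow X$ be the inclusion. Since $r$ is a strong deformation retraction we have $r\circ i=\mathrm{id}_Y$, hence $r\circ r=r$. I will split into cases on $s,t\in\mathbb{R}_0^+$.
\begin{itemize}
\item If $s=0$ or $t=0$, the identity $\varphi(t,\varphi(s,x))=\varphi(t+s,x)$ is immediate, because one of the two maps involved is the identity.
\item If $s,t>0$, then $\varphi(t,\varphi(s,x))=r(r(x))=r(x)=\varphi(t+s,x)$, since $t+s>0$.
\end{itemize}

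\emph{Continuity.} Recall that the open sets of the finite $T_0$-space $X$ are exactly the down-sets. Let $U\subseteq X$ be open. Then
\[
\varphi^{-1}(U)=\bigl(\{0\}\times U\bigr)\cup\bigl((0,\infty)\times r^{-1}(U)\bigr).
\]
The key step is to show $U\subseteq r^{-1}(U)$. If $x\in U$, then $r(x)\leq x$ because $r\leq \mathrm{id}_X$. Since $U$ is a down-set, this gives $r(x)\in U$. Consequently
\[
\varphi^{-1}(U)=\bigl([0,\infty)\times U\bigr)\cup\bigl((0,\infty)\times r^{-1}(U)\bigr).
\]
Both pieces are open in $\mathbb{R}_0^+\times X$: the set $r^{-1}(U)$ is open because $r$ is continuous, and $(0,\infty)$ and $[0,\infty)$ are open in $\mathbb{R}_0^+$. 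Hence $\varphi$ is continuous, and therefore $\varphi$ is a semiflow.

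The inclusion $U\subseteq r^{-1}(U)$ is precisely the step that uses $r\leq\mathrm{id}_X$. Without it, the slice $\{0\}\times U$ would not be absorbed into an open set, and continuity would fail. This is consistent with Theorem~\ref{thm_semiflow_description}, which forces $r\leq \varphi_0$ for any non-trivial semiflow.
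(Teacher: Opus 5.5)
Your proof is correct and takes essentially the same route as the paper. You check the axioms identically via $r\circ r=r$, and continuity rests on the same observation that $r\leq \mathrm{id}_X$ gives $r(U)\subseteq U$ for every down-set $U$; the only difference is that you compute $\varphi^{-1}(U)$ globally, while the paper argues locally at points $(0,x)$ using the neighbourhood $[0,\varepsilon)\times U_x$.
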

\begin{proof}
   Let us prove the continuity of $\varphi$. Since $\varphi$ is constant on $(0,\infty)\times X$, it suffices to verify its continuity at points of the form $(0,x)$. Let $V$ be an open neighborhood of $\varphi(0,x)=x$. Then $U_x\subseteq V$. On the other hand, for any $\varepsilon>0$, the set $[0,\varepsilon)\times U_x$ is an open neighborhood of $(0,x)$ in $\mathbb{R}_0^+\times X$. Clearly, \[ \varphi([0,\varepsilon)\times U_x)=U_x\cup r(U_x). \] Since $r$ is continuous, $r(U_x)=U_{r(x)}$. Moreover, as $r\leq \mathrm{id}_X$, we have $r(U_x)\subseteq U_x$. Therefore, \[ \varphi([0,\varepsilon)\times U_x)\subseteq U_x\subseteq V, \] which proves the continuity of $\varphi$ at $(0,x)$. To conclude, it is clear that $\varphi$ satisfies the first condition of a semiflow. Furthermore, \[ \varphi(t,\varphi(s,x))=\varphi(t+s,x) \] holds trivially whenever $t=0$ or $s=0$. If $t,s>0$, then \[ \varphi(t,\varphi(s,x)) =\varphi(t,r(x)) =r(r(x)) =r(x) =\varphi(t+s,x), \] because $r$ is a retraction and therefore idempotent, that is, $r^2=r$.
\end{proof}

Thus, we have a complete characterization of semiflows within this context.

\section{Proof of the main results}\label{sec_main_result}

\begin{proof}[Proof of Theorem \ref{thm_main_result}
]
We briefly outline the key ideas behind its proof. The construction is inspired by the work of J.A. Barmak and E.G. Minian in~\cite{barmak2009automorphism}, which realizes any finite group as the automorphism group of a finite $T_0$-space. We apply this construction twice—once for each group—and then modify the resulting spaces appropriately to define a semiflow, using the characterization of non-trivial semiflows given in Theorem~\ref{thm_semiflow_description} and Theorem~\ref{thm_semiflow_description_2}.
\begin{figure}
    \centering
    \includegraphics[width=0.5\linewidth]{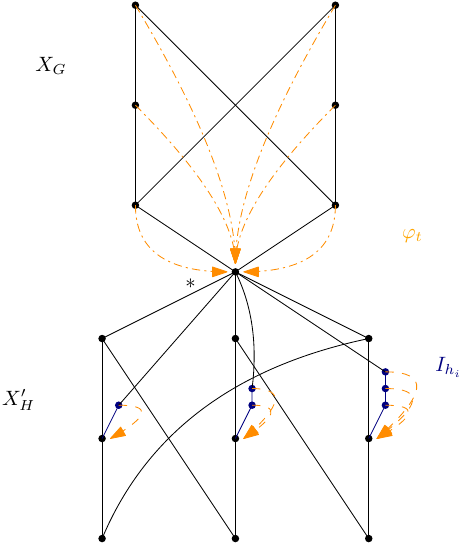}
    \caption{Hasse diagram of $X$ and a semiflow $\varphi$ such that $\textnormal{Aut(X)}\cong \mathbb{Z}_2$ and $\textnormal{Aut}(\varphi_t(X))\cong \mathbb{Z}_3$ for any $t>0$. }
    \label{fig:hasse_diagram}
\end{figure}

Let \( S_G = \{s_1, \ldots, s_n\} \) and \( K_H = \{k_1, \ldots, k_m\} \) be generating sets for the groups \( G \) and \( H \), respectively. Define $X_G=G\times \{-1,0,1,...,n \}$ and consider the following partial order on \( X_G \):
\begin{enumerate}
    \item \( (g, i) \leq (g, j) \) if \( -1 \leq i \leq j \leq n \),
    \item \( (g, -1) \leq (g', i) \) if \( g' = g s_i^{-1} \) for \( 1 \leq i \leq n \).
\end{enumerate}
It is straightforward to verify that this defines a partial order on \( X_G \). Repeat the same construction for \( H \), defining $X_H=H\times \{-1,0,1,...,m \}$ with the partial order:
\begin{enumerate}
    \item \( (h,i)\leq (h,j) \) if $-1\leq i\leq j\leq m$.
    \item \( (h,-1)\leq (h',i)  \) if $h'=hk_i^{-1}$ where $1\leq i\leq m$.
\end{enumerate}

As shown in~\cite{barmak2009automorphism}, these constructions yield finite $T_0$-spaces with automorphism groups isomorphic to the respective groups: \( \textnormal{Aut}(X_G) \cong G\) and \( \textnormal{Aut}(X_H) \cong H\).

Next, label the elements of \( H \) as \( H = \{h_i\}_{i=1}^{|H|} \). For each \( h_i \in H \), define a totally ordered set
\[
I_{h_i} = \{j^i_1, j^i_2, \ldots, j^i_i\} \quad \text{with} \quad j^i_1 < j^i_2 < \cdots < j^i_i.
\]
Now consider the disjoint union
\[
X_H' = X_H \bigsqcup \left( \bigsqcup_{i=1}^{|H|} I_{h_i} \right),
\]
where we preserve the internal orderings and extend the partial order by declaring
\[
(h_i, 0) < j^i_1
\]
for each $i=1,\ldots, |H|$. The remaining relations are obtained by transitivity. Note that each \( j^i_i \in I_{h_i} \) is the unique maximal down beat point of height \( i \) in $X_H'$.
By Proposition~\ref{prop_properties_homeomorphism}, this implies that \( \textnormal{Aut}(X_H')\) is the trivial group.

Now define
\begin{align*}
    X=X_H'\circledast \{* \}\circledast X_G,
\end{align*}
where \( \{*\} \) denotes the one-point space. By Proposition~\ref{prop_automorfismos_join}, we have \(  \textnormal{Aut}(X)\cong G  \). Define the map \( \varphi : \mathbb{R}_0^+ \times X \rightarrow X \) by
\[
\varphi(t, x) :=
\begin{cases}
* & \text{if } x \in X_G \cup \{*\}, \\
x & \text{if } x \in X_H, \\
(h_i, 0) & \text{if } x \in I_{h_i},
\end{cases}
\]
for \( t > 0 \), and set \( \varphi(0, x) := x \). Figure~\ref{fig:hasse_diagram} illustrates the Hasse diagram of \( X \) for the case \( G = \mathbb{Z}_2 \) and \( H = \mathbb{Z}_3 \), where the posets \( I_{h_i} \) are shown in blue and the semiflow \( \varphi \) is schematically represented in orange.

By construction,  $\textnormal{Aut}(\varphi(0, X)) \cong G$ and $\textnormal{Aut}(\varphi(t, X)) \cong H$ for all $t > 0$, since \( \varphi(0, X) = X \) and \( \varphi(t, X) = X_H\circledast \{* \} \) for any \( t > 0 \). It remains to verify that \( \varphi \) is indeed a semiflow. Let \( r := \varphi_t \) for \( t > 0 \). We show that \( r \) is a strong deformation retraction and that \( \textnormal{id}_X \geq r \). Since \( * < x \) for any \( x \in X_G \), we have \( x \geq r(x) = * \). Moreover, for any \( j^i_k \in I_{h_i} \) with \( 1 \leq k \leq i \), we have \( j^i_k \geq r(j^i_k) = (h_i, 0) \). Thus, \( \textnormal{id}_X \geq r \). Clearly, $r$ preserves the order of the poset $X$, which implies that $r$ is a continuous map. By the definition of $r$, \( r \circ i = \textnormal{id}_{X_H\circledast\{*\}} \), so \( r \) is a strong deformation retraction under the hypotheses of Theorem~\ref{thm_semiflow_description_2}. Therefore, \( \varphi \) is a semiflow.
\end{proof}

\begin{proof}[Proof of Theorem \ref{cor_prescribed_weak_type}]

Our starting point is the topological space $X$ constructed above, together with one of the constructions introduced in \cite{chocano2020some}. Recall that, in \cite{chocano2020some}, the authors provide a method for constructing finite $T_0$-spaces having the prescribed weak homotopy type of a given simplicial complex and a trivial homeomorphism group. One of the key ideas underlying this method is the construction of an infinite family of pairwise non-homeomorphic finite $T_0$-spaces with trivial homeomorphism groups, all of which have the weak homotopy type of a point. Each member of this family does not have beat points and contains weak beat points that are not beat points. This distinction will be crucial in the argument below. Thus, starting with a finite $T_0$-space having the desired weak homotopy type, for instance, the face poset of the given simplicial complex, one attaches a different member of this family to each point. The resulting space has the same weak homotopy type as the original space and a trivial homeomorphism group. Let $Y$ be a finite $T_0$-space with the same weak homotopy type as $|K|$ and a trivial homeomorphism group, constructed using the method introduced in \cite{chocano2020some}. If $|K|$ is contractible, then we may use the same arguments from the proof of Theorem \ref{thm_main_result} because $X$ is contractible. If $K$ is not contractible, then $Y$ contains at least one weak beat point. Consider \[ Z=X\sqcup Y\sqcup \{\overline{*} \},\] where we retain the given partial orders on $X$ and $Y$ and extend them by declaring that \[ y< \overline{*}>* \] for a chosen $y\in Y$ such that its height equals to $0$, with all remaining relations determined by transitivity. It follows from Proposition \ref{prop_properties_homeomorphism} that \[ \operatorname{Aut}(Z)\cong G, \] since $X_H'$ has no weak beat points and $Y$ does not have beat points. We extend the map $r$ defined in the proof of Theorem \ref{thm_main_result} to $Z$ by declaring that its restriction to $Y\sqcup\{\overline{*} \}$ is the identity map. By construction, the space $Z$ and the extended map $r$ satisfy all the required properties. To complete the proof, note that $Z$ has the same homotopy type as $X_H'\circledast\{* \}\sqcup Y \sqcup\{\overline{*}\}$ since $*$ is a minimum of $\{ *\}\circledast X_G$. For the same reason, $X_H'\circledast\{* \}\sqcup Y \sqcup\{\overline{*}\}$ is homotopy equivalent to $\{* \}\sqcup Y \sqcup \{\overline{*}\}$. Now, $*$ is a beat point and after removing it $\overline{*}$ is a beat point. Removing the previous point we obtain that $Z$ is homotopy equivalent to $Y$, which gives that $Z$ has the same weak homotopy type as $|K|$. \end{proof}
 
\bibliography{bibliografia}
\bibliographystyle{plain}

\newcommand{\Addresses}{{%additional braces for segregating \footnotesize
  \bigskip
  \footnotesize

  \textsc{ P.J. Chocano, Departamento de Matemática Aplicada, Ciencia e Ingeniería de los Materiales y Tecnología Electrónica, ESCET Universidad Rey Juan Carlos, 28933 Móstoles (Madrid), Spain}\par\nopagebreak
 \textit{E-mail address}:\texttt{pedro.chocano@urjc.es}

}}

\Addresses
\end{document}